\newtheorem{theorem}{Theorem}[section]
\newtheorem{prop}{Proposition}[section]
\newtheorem{definition}{Definition}[section]
\newtheorem{assum}{Assumption}[section]
\newtheorem{remark}{Remark}[section]
\title{\LARGE \bf
A mean-field game economic growth model
}
\author{Diogo Gomes$^{1}$, Laurent Lafleche$^{2}$, and  Levon Nurbekyan$^{3}$\thanks{$^{1}$D. Gomes is with the King Abdullah University of Science and Technology (KAUST), CEMSE Division , Thuwal 23955-6900. Saudi Arabia, and  
        KAUST SRI, Center for Uncertainty Quantification in Computational Science and Engineering.
        {\tt\small diogo.gomes@kaust.edu.sa}}\thanks{$^{2}$ Laurent Lafleche is with the King Abdullah University of Science and
Technology (KAUST), CEMSE Division , Thuwal 23955-6900. Saudi Arabia, and KAUST VSRP Program {\tt\small ll.laurent.lafleche@gmail.com} }\thanks{$^{3}$L. Nurbekyan is with the King Abdullah University of Science and
Technology (KAUST), CEMSE Division , Thuwal 23955-6900. Saudi Arabia.
        {\tt\small levon.nurbekyan@kaust.edu.sa} }
}
\def\ba{\textbf{a}}
\def\bc{\textbf{c}}
\def\bk{\textbf{k}}
\def\bp{\textbf{p}}
\def\bi{\textbf{i}}
\def\R{\mathbb{R}}
\def\L{\mathcal{L}}
\def\N{\mathbb{N}}
\newcommand             {\qa}           {\mathbf q_a}
\newcommand             {\qk}           {\mathbf q_k}
\begin{document}

\maketitle
\thispagestyle{empty}
\pagestyle{empty}

\begin{abstract}
Here, we examine a mean-field game (MFG) that models the economic growth of a population of non-cooperative rational agents. 
In this MFG, agents are described by two state variables - the capital and consumer goods they own. 
Each agent seeks to maximize their utility by taking into account statistical data of the total population. The individual actions drive the evolution of the players, and a market-clearing condition determines the relative price of capital and consumer goods.  We study the existence and uniqueness of optimal strategies of the agents and develop numerical methods
to compute these strategies and the equilibrium price. 
\end{abstract}

\section{INTRODUCTION}

Mathematical methods are central to modern economic theory and while
the behavior of economic agents cannot always be reduced to a precise mathematical formulation, utility maximization principles
and game-theoretical equilibria explain many important phenomena.
This is particularly relevant in problems where the hypothesis of rational expectations  holds \cite{lucas}. 

In numerous instances, the behavior of economic agents cannot be adequately captured by the representative agent assumption. Heterogeneous agent models
allow the study of questions where the differences between agents are of primary relevance, see \cite{bewley}, \cite{aiyagari}, or \cite{huggett}.

Mean-field games (MFG) were introduced in the engineering community in \cite{Caines1, Caines2}, and in the mathematical community in \cite{ll1,ll2,ll3} to model problems with a large number of agents acting non-cooperatively. Applications in economics and sustainable growth were some of earlier the motivations for these problems \cite{llg1, llg2}. 
While a substantial part of recent research on MFG has focused on 
mathematical questions \cite{porretta2,  GPM2, GPM3,
	MR3333058,  GPatVrt,  GPat, GS, cardaliaguet},
a number of emergent applications have been identified, 
including machine learning \cite{PQ1, PQ2}, price formation \cite{MR2835888}, 
\cite{MR2817378}, energy systems \cite{MR3030935, MR3195845}, and socio-economic applications \cite{MR3029461, GVW-Socio-Economic, GVW-dual}.
In economics, MFG  
provide a solid mathematical framework to 
investigate economic problems with heterogeneous agents \cite{lucasmoll, moll, moll2, moll3}. 

In \cite{GNP}, the authors present a mean-field game model
for economic growth in a population of rational agents. In this model, agents can produce and trade capital and consumer goods and seek to maximize a utility function.
An equilibrium condition determines the
 single macroeconomic variable, the price of capital goods measured in consumer goods.
From the mathematical point of view, little is known about this model. Here, we study the existence, uniqueness and number of other qualitative and quantitative properties of optimal strategies of agents.
We also develop numerical methods to calculate these optimal strategies and the equilibrium price. 

We close this introduction with a brief outline of the paper. We begin by developing the model in Section \ref{tm}. Next, in Section \ref{eot}, we investigate the optimal actions of the agents. A particular case with $N$ agents is examined in Section \ref{npl}. This $N$ player problem is then used to develop a numerical scheme, whose implementation as well as some results are presented in Section  \ref{sec: numerics}.
The paper ends,
in Section \ref{conc}, with a short discussion on open problems and future research directions.

\section{The model}
\label{tm}

The growth model we consider here was introduced in \cite{GNP}. This model features wealth (in the form of consumer goods) and capital accumulation
with a capital-dependent production function.

\subsection{Microeconomic framework}

At the time $t$, each agent has an amount  $\bk_t$  of capital and $\ba_t$ of consumer goods. 
Agents select a consumption rate $\bc_t$ and an 
investment rate  $\mathbf{e}_t$. 
The investment is made by exchanging consumer goods for capital goods. 
This exchange is made through a clearing mechanism at a price $\bp_t$
determined by the market. More precisely, agents exchange 
$\bp_t\mathbf{e}_t$ of consumer goods 
by an amount $\mathbf{e}_t$  of capital goods.

Agents use capital to produce either consumer goods or additional capital.
The production is specified by two production functions that depend
on the price level and capital. The production function for consumer goods is 
 $\Theta(k,p)$ and the production function for capital goods is $\Xi(k,p)$. 
Finally, our model also takes into account the possibility of capital depreciation through a function $g(k,p)$.

All the previous factors are assembled in the following microeconomic dynamics for a single agent:
\begin{equation}
\label{1}
\begin{cases}
\dot{\ba}_t =-\bc_t-\bp_t\mathbf{e}_t+\Theta(\bk_t,\bp_t)\\
\dot{\bk}_t= g(\bk_t,\bp_t)+\mathbf{e}_t+\Xi(\bk_t,\bp_t).
\end{cases}
\end{equation}
In alternative, we can regard the investment $\bi_t = \mathbf{e}_t+\Xi(\bk_t,\bp_t)$ as the control variable. Then,
we define the global production function
as
$F(k,p) := \Theta(\bk_t,\bp_t) + \bp_t \Xi(\bk_t,\bp_t)$. 
Accordingly, \eqref{1} becomes
\begin{equation}
\label{2}
\begin{cases}
\dot{\ba}_t = -\bc_t-\bp_t\bi_t+F(\bk_t,\bp_t)\\
\dot{\bk}_t = \bi_t + g(\bk_t,\bp_t).
\end{cases}
\end{equation}

Each agent has preferences on the value of his consumption, investment, consumer goods and  capital. These preferences are encoded in a utility function $u(a,k,c,i)$.
 We assume that $u$ is increasing and concave. The goal of each agent is to maximize his utility during a time horizon $T$ for a given initial amount of goods $\ba_0=a_0$ and capital $\bk_0=k_0$.
We define the utility or value function as
\begin{equation}\label{def_value_function}
V(a_0,k_0,t) := \sup_{(\bc,\bi)\in\mathcal{C}\times\mathcal{I}}\int_t^T u(\ba_s,\bk_s,\bc_s,\bi_s)d s,
\end{equation}
where $\mathcal{C}$ and $\mathcal{I}$ are the functional spaces on $(0,T)$ where the controls $\bi$ and $\bc$ are selected.

The Hamiltonian associated with the dynamics \eqref{2} and the optimization problem \eqref{def_value_function} is
\begin{align}
\label{def_H}
& H(a,k,q_a,q_k,p) :=\\ 
&\sup_{(c,i)\in\R^2}((-c-pi+F(k,p))q_a+(i+g(k,p))q_k+u(a,k, c,i)).
\end{align}
If the value function $V$ in \eqref{def_value_function} is continuously differentiable,  then it solves the Hamilton-Jacobi equation
\begin{equation}\label{eq_HJ}
\partial_tV(a,k,t) + H(a,k,\partial_a V(a,k,t),\partial_k V(a,k,t),\bp_t) = 0,
\end{equation}
with $V(a,k,T) = 0$.
Moreover, any twice continuously differentiable solution of the previous equation is the value function.

Finally, given a twice continuously differentiable solution of \eqref{eq_HJ}, 
the optimal controls $c_t^*(a,k)$ and $i_t^*(a,k)$ 
are determined in feedback form through the equations
\begin{equation}
\begin{cases}
\partial_{q_a}H(a,k,\partial_aV,\partial_kV,\bp_t)  =  -c_t^* - \bp_t i_t^* + F(k,\bp_t)\\
\partial_{q_k}H(a,k,\partial_aV,\partial_kV,\bp_t)  =  i_t^* + g(k,\bp_t).
\end{cases}
\end{equation}

\subsection{Macroeconomic setting}

On the macroeconomic scale, agents are described by a probability density
$\rho$; 
for each time  $t\in[0,T]$, $\rho_t(a,k)$ is the probability density  of agents with an amount $a$ of consumer goods and $k$ of capital. Consequently, 
\[
\int_{\R^2}\rho_t(a,k) d a d k = 1, \quad \text{for}\ t\in[0,T].
\]
We assume all agents are rational. So, the dynamics of each individual is given by 
\[
\left\lbrace
\begin{array}{rcl}
\dot{\ba}_t & = & \partial_{q_a}H(\ba_t,\bk_t,\partial_aV(\ba_t,\bk_t,t),\partial_kV(\ba_t,\bk_t,t),\bp_t)\\
\dot{\bk}_t & = & \partial_{q_k}H(\ba_t,\bk_t,\partial_aV(\ba_t,\bk_t,t),\partial_kV(\ba_t,\bk_t,t),\bp_t).\\
\end{array}
\right.
\]
Accordingly, the density of players solves the transport equation
\begin{equation}\label{eq_transport}
\partial_t\rho + \partial_a(\partial_{q_a}H\rho) + \partial_k(\partial_{q_k}H\rho) = 0.
\end{equation}

\subsection{Mean-field game formulation of the problem}

As the agents can only exchange consumer goods and capital with other agents, we 
have a balance equation 
\[
\int_{\R^2}e_t^*(a,k)\rho(a,k,t) d a d k = 0,
\]
for $0\leq t\leq T$. 
Equivalently,
\begin{equation}\label{eq_equilibre_p}
\int_{\R^2}i_t^*(a,k)\rho_t(a,k) d a d k = \int_{\R^2}\Xi(k,\bp_t)\rho_t(a,k) d a d k. 
\end{equation}
The previous balance condition, the Hamilton-Jacobi equation (\ref{eq_HJ}), and the transport equation (\ref{eq_transport}) determine a mean-field game formulation of the problem
\begin{equation}\label{syst_mfg}
\begin{cases}
\partial_tV  +  H(a,k,\partial_aV,\partial_kV,\bp_t)  =  0\\
\partial_t\rho  +  \partial_a(\partial_{q_a}H\rho) + \partial_k(\partial_{q_k}H\rho)  =  0
\end{cases}
\end{equation}
with
\[
V(a,k,T) = 0,\quad  \rho(a,k,0)=\rho_0(a,k)
\]
for some $\rho_0(a,k)$.
The coupling between these two equations
follows from the equilibrium equation (\ref{eq_equilibre_p})
that
determines
$\bp_t$.

\section{Existence of optimal trajectories}
\label{eot}

In this section, we  discuss assumptions that ensure that problem  (\ref{def_value_function}) is well posed.  We are interested in the existence and uniqueness of optimal trajectories and controls under a given price dynamics. If the utility function $u$ is concave, and the controls $c$ and $i$ are bounded, then  (\ref{def_value_function}) admits a maximizer (see \cite{GNP}). Here, we do not  assume a priori that the controls are bounded. Instead, we impose natural growth constraints on the utility function $u$ and prove the existence of optimal trajectories and controls.

In our analysis, in  contrast to classical optimal control theory problems, the utility function $u$ is not coercive. Hence, a central technical point that we need to address is the boundedness of the controls in the interval $(0,T)$.

The uniqueness of optimal trajectories for strictly concave utility functions was proven in \cite{GNP}.
 
\subsection{Main assumptions}

Our main assumptions are as follows.

                \begin{assum}[g is sub-linear]\label{gsublinear}
                        There exists $(g_1,g_2) \in\L^\infty_{loc}(\R)^2$ such that
                        \[
                        g(k,p)\leq g_1(p)k+g_2(p),
                        \]
                        for all $(k,p)\in\R^2$.

                        \end{assum}

The previous assumption includes, for instance the important case of a linear depreciation function $g(k,p)=-\beta k$, for some $\beta>0$.

                        \begin{assum}[F is sub-linear]\label{fsublinear}
                                There exists $(F_1,F_2) \in\L^\infty_{loc}(\R)^2$ such that 
                                \[
                                F(k,p)\leq F_1(p)k+F_2(p),
                                \]
                                for all $(k,p)\in\R^2$. 
                                \end{assum}
                                
                                Before stating the next assumption, we recall that for $x\in \R$, $x^+=\max\{x,0\}$, and $x^-=\max\{-x,0\}$.

                                \begin{assum}
                                        \label{gammaalpha}
                                        There exists $(\gamma_j)_{j\in\{ 1,4\}}\in[0,1]^4$, $ (\alpha_j)_{j\in\{ 1,4\}}\in[1,+\infty[^4$ and $C_u\in\R_+^*$ such that
                                        \begin{align*}
                                                u(a, k, c,i) &\leq C_u ( 1 + (c^+)^{\gamma_1} + (a^+)^{\gamma_3} + (k^+)^{\gamma_4}\\
                                                &- (c^-)^{\alpha_1} - |i|^{\alpha_2} - (a^-)^{\alpha_3} - (k^-)^{\alpha_4}),
                                        \end{align*}
                                        for all $(c,i,a,k)\in\R^4$.
                                        \end{assum}

                                                        \begin{definition}\label{def_loc_L1}
                                                                Let $A,B$ and $C$ be normed vector spaces, $\Omega\times I\subset A\times B$ be open subsets. 
                                                                $F : \Omega\times I \to C$ is \textbf{locally uniformly integrable} in the second variable if for all $(x_0,t_0)\in\Omega\times I$ there exists $X\times J$ neighborhood of $(x_0,t_0)$ so that
                                                                \[
                                                                \left(t\mapsto \sup_{x\in X}|F(x,t)|\right)\in L^1(J).
                                                                \]
                                                                \end{definition}

\subsection{Existence of Solutions}

Finally, we state our main result on the existence of solutions. \begin{theorem}\label{th_existence}
Let $L^1_{R,loc}:=\{c\in L^1_{loc}\left([0,T)\right), \int_0^Tc^+\leq R\}$.\\
Assume $u\in C^1(\R^4)$ is concave, verifies Assumption \ref{gammaalpha}, and that there exists $C\in\R^+$ such that
\[
\left|\frac{\partial u}{\partial x_i}(x_1,...x_4)\right| \leq C\left(1+\sum_{j=1}^{4}|x_j|^{\alpha_j}\right)
\]
for all $1\leq i\leq 4$. Moreover, suppose that $\bp$ is bounded in $(0,T)$, $F$ and $g$ are sub-linear (Assumptions \ref{gsublinear} and  \ref{fsublinear}) and such that
\begin{itemize}
        \item $\forall k\in\R,\ g(k,\cdot)\in\L^\infty_{loc}(\R)$,
        \item $\forall k\in\R,\ F(k,\cdot)\in\L^\infty_{loc}(\R)$,
        \item $(k,t)\mapsto F(k,\bp_t)$ is locally uniformly integrable in the second variable.
\end{itemize}
Let $P:=\|\bp\|_{L^\infty(0,T)}$ and $R\in\R$ be such that $R \geq P \|g(k_0,\cdot)\|_{\L^\infty(-P,P)} + \|F(k_0,\cdot)\|_{\L^\infty(-P,P)}$. Then, for a given initial quantity $(a_0,k_0)\in\R^2$ of consumer goods and capital, there exists a maximizer to (\ref{def_value_function}) with $\mathcal{C}=L^1_{R,loc}$ and $\mathcal{I}=L^1_{loc}$.
\end{theorem}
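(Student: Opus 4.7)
The plan is to apply the direct method of the calculus of variations: take a maximizing sequence, derive a priori bounds from Assumption~\ref{gammaalpha} and the sub-linearity of $F$ and $g$, extract subsequential limits, and pass to the limit using the concavity of $u$.

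\emph{Step 1 (finiteness of $V$).} First I would show that $V(a_0,k_0,0)<+\infty$. For any admissible $(c,i)\in\mathcal{C}\times\mathcal{I}$, a Gr\"onwall estimate for the $\bk$-equation, using $g(k,p)\le g_1(p)k+g_2(p)$ and $\|\bp\|_{L^\infty}\le P$, will give $\|\bk^+\|_{L^\infty}\le C(1+\|i\|_{L^1})$; the same strategy, combined with sub-linearity of $F$, bounds $\|\ba^+\|_{L^\infty}$ by $C(1+\|c^-\|_{L^1}+\|i\|_{L^1}+\|\bk^+\|_{L^\infty})$. Plugging these into Assumption~\ref{gammaalpha} and using $\int_0^T c^+\le R$ from the definition of $\mathcal{C}$ should yield an estimate of the schematic form
\[
\int_0^T u\,dt \le C\bigl(1+\|i\|_{L^1}^{\gamma}+\|c^-\|_{L^1}^{\gamma}\bigr)-C\!\int_0^T\!\bigl(|i|^{\alpha_2}+(c^-)^{\alpha_1}\bigr)dt,
\]
with $\gamma:=\max_j\gamma_j\le 1\le\min_j\alpha_j$. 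Since $\|f\|_{L^1}\le T^{1-1/\alpha}\|f\|_{L^\alpha}$ on $(0,T)$, Young's inequality absorbs the positive powers into the coercive negative ones to give a uniform bound; the lower bound $V>-\infty$ follows by evaluating on $c\equiv i\equiv 0$, which is admissible because $R\ge 0$.

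\emph{Step 2 (compactness).} Next I would fix a maximizing sequence $(c_n,i_n)$. The estimate above forces $(c_n^-)^{\alpha_1}$, $|i_n|^{\alpha_2}$, $(a_n^-)^{\alpha_3}$, and $(k_n^-)^{\alpha_4}$ to be uniformly bounded in $L^1(0,T)$, while $c_n^+$ is bounded in $L^1$ by the constraint. Gr\"onwall then yields uniform $L^\infty$ bounds on $(a_n,k_n)$; since $\dot{k}_n$ is bounded in $L^{\alpha_2}$, $k_n$ is equicontinuous and Arzel\`a--Ascoli extracts $k_n\to k$ uniformly along a subsequence. Banach--Alaoglu provides $c_n^-\rightharpoonup c^-$ in $L^{\alpha_1}$ and $i_n\rightharpoonup i$ in $L^{\alpha_2}$. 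The subtle point is extracting $L^1$-compactness of $c_n^+$ from its sole total-mass bound. My plan is to exploit the concavity of $u$ together with the coercive $(a^-)^{\alpha_3}$ term: concentration of $c_n^+$ produces a downward jump of $a_n$ (hence a large $a^-$ penalty), while replacing $c_n^+$ by a mollified version preserves its $L^1$ norm and strictly increases the utility by Jensen applied to the concave $(c^+)^{\gamma_1}$. This lets one reduce to an equi-integrable maximizing sequence; Dunford--Pettis then gives $c_n^+\rightharpoonup c^+$ in $L^1$, and a further subsequence extracts $a_n\to a$ uniformly.

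\emph{Step 3 (passage to the limit).} Using the uniform convergence of $(a_n,k_n)$, the sub-linearity of $F,g$, and the local uniform integrability of $(k,t)\mapsto F(k,\bp_t)$, I would pass to the limit in the integrated form of \eqref{2} and identify $(a,k)$ as the trajectory generated by $(c,i)$. Since $u$ is concave in all arguments and the map $(c,i)\mapsto(a,k)$ is affine, the functional $(c,i)\mapsto\int_0^T u(a,k,c,i)\,dt$ is concave, hence weakly upper semi-continuous. Together with the uniform convergence of $(a_n,k_n)$ this yields
\[
\int_0^T u(a,k,c,i)\,dt \ge \limsup_n \int_0^T u(a_n,k_n,c_n,i_n)\,dt = V(a_0,k_0,0),
\]
so $(c,i)$ is a maximizer. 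The main difficulty, as flagged, is Step~2: passing from the total-mass bound on $c_n^+$ to genuine equi-integrability requires combining the concavity of $u$ with the coercive $(a^-)^{\alpha_3}$ and $|i|^{\alpha_2}$ terms of Assumption~\ref{gammaalpha} to rule out concentration along a maximizing sequence.
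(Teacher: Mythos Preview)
Your proposal follows exactly the paper's approach: the direct method in the calculus of variations, with Assumption~\ref{gammaalpha} supplying the a priori bounds on a maximizing sequence, the sub-linearity of $F$ and $g$ ensuring admissibility of the weak limit, and the concavity of $u$ yielding weak upper semicontinuity of the functional. The paper's own proof is in fact only a four-line sketch of precisely these steps, so your outline is already more detailed than what appears there; the one inaccuracy is that the map $(c,i)\mapsto(a,k)$ is \emph{not} affine unless $F$ and $g$ are affine in $k$, but since you also invoke the uniform convergence of $(a_n,k_n)$ this is not actually needed for the upper-semicontinuity step.
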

\begin{remark} If, for each $p$, the functions $k\mapsto F(k,p)$ and $k\mapsto g(k,p)$ are concave, the hypotheses of Theorem \ref{th_existence} hold, and we obtain the existence of optimal trajectories. 
\end{remark}
\begin{proof}
We sketch here the proof of the theorem that is based on 
the direct method in the calculus of variations. 
We select a minimizing sequence $(\ba_t^k, \bk_t^k, \bc^k_t, \bi_t^k)$. Assumption 
\ref{gammaalpha} ensures the boundedness of this sequence.
Therefore, we can extract a weakly convergent subsequence. 
Next, Assumptions
\ref{gsublinear} and \ref{fsublinear} give that the limit is a admissible trajectory for \eqref{2}. Finally, the concavity of $u$ implies that this trajectory is a minimizer by weak upper semicontinuity of \eqref{def_value_function}.
\end{proof}

\section{$N$-player approximation}
\label{npl}

In this section, we construct a $N$-player approximation of our growth model. We assume that the population in our model consists of a fixed number of players $N\in\N$ and obtain a system of equations for the optimal trajectories.

Let $\ba^n_t,\bk^n_t$ be the amount of consumer goods and capital of the player $n$ at time $t$, for $1\leq n \leq N$. We select an individual agent. As shown in the previous section, given a price dynamics $\bp_t$, we can always find an optimal strategy for this player. In the finite player case, we assume arbitrary initial wealth in consumer goods and capital for the $N$ players. Then, using individual optimal strategies, we build an explicit solution to the transport equation (\ref{eq_transport}). Hence, here, we provide a theoretical basis for our numerical computations.

A full analysis of the mean-field game \eqref{syst_mfg} requires the computation of the equilibrium price. 
This price is determined by the equilibrium condition \eqref{eq_equilibre_p}. We calculate this price using a fixed point formulation for the equilibrium condition (see Section \ref{sec: numerics}).

\subsection{Solution to the Transport Equation}

Let $a^n_0 = \ba^n_0$ and $k^n_0 = \bk^n_0$ be initial consumer goods and capital levels for the $N$\ agents. On the macroeconomic scale, the density of agents at the initial time $t=0$ is the probability measure
\[
\rho_0 = \dfrac{1}{N}\sum_{n=1}^{N} \delta_{(a^n_0,k^n_0)},
\]
where $\delta$ is the Dirac delta.

We recall that a probability measure $\rho$ solves \eqref{eq_transport}
in the sense of distributions if for every $\varphi\in C^\infty_c(\R^2\times \R_+)$, we have
\begin{equation}
\label{ssd}
\int_0^{+\infty}
\int_{\R^2}
(\partial_t\varphi  +  \partial_a\varphi\partial_{q_a}H + \partial_k\varphi \partial_{q_k}H)\rho dk da dt=0.
\end{equation}

\begin{prop}\label{prop_solutiontransport}
        Suppose $(\ba^n,\bk^n)\in C^0([0,T])^2$ solve \eqref{2} with initial data $(a_0^n,k_0^n)$, respectively. Then
        \begin{equation}
        \label{rdef}
        \rho_t := \dfrac{1}{N}\sum_{n=1}^{N} \delta_{(\ba^n_t,\bk^n_t)}
        \end{equation}
        solves the transport equation \eqref{eq_transport} in the sense of distributions and verifies
        \[
         \int_{\R^2}\rho_t(a,k) dadk = 1,\ \text{for all}\ t\in[0,T].
        \]
\end{prop}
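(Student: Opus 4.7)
The plan is to verify both claims directly from the definitions. The normalization is immediate: by linearity, $\int_{\R^2}\rho_t\,dadk = \frac{1}{N}\sum_{n=1}^N \int_{\R^2}\delta_{(\ba^n_t,\bk^n_t)}\,dadk = 1$ for every $t\in[0,T]$.

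For the weak formulation, I would fix a test function $\varphi\in C^\infty_c(\R^2\times\R_+)$ and substitute \eqref{rdef} into \eqref{ssd}. Since integration against a Dirac mass is evaluation at its support, the spatial integral collapses and the left-hand side of \eqref{ssd} becomes
\[
\frac{1}{N}\sum_{n=1}^N \int_0^{+\infty}\left(\partial_t\varphi + \partial_a\varphi\,\partial_{q_a}H + \partial_k\varphi\,\partial_{q_k}H\right)\big(\ba^n_t,\bk^n_t,t\big)\,dt,
\]
with the Hamiltonian derivatives evaluated, as prescribed in \eqref{eq_transport}, at $(\ba^n_t,\bk^n_t,\partial_aV,\partial_kV,\bp_t)$. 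The next step is to recognize the integrand as a total time derivative. For this I would invoke the feedback formulas displayed right after \eqref{eq_HJ}: combined with the microeconomic dynamics \eqref{2} for $(\ba^n,\bk^n)$ they give
\[
\dot{\ba}^n_t = \partial_{q_a}H, \qquad \dot{\bk}^n_t = \partial_{q_k}H,
\]
with $H$ evaluated exactly at the point above. The chain rule then identifies the integrand with $\tfrac{d}{dt}\varphi(\ba^n_t,\bk^n_t,t)$, and integrating in time over $(0,+\infty)$ produces zero, because the compact support of $\varphi$ in $\R^2\times\R_+$ kills the boundary contributions at both endpoints. Summing over $n$ and dividing by $N$ yields \eqref{ssd}.

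The main obstacle is really just the careful matching of arguments in the identification step: checking that the ``implicit'' evaluation of $\partial_{q_a}H$ and $\partial_{q_k}H$ appearing in the transport equation \eqref{eq_transport} genuinely coincides, along an optimal trajectory, with the right-hand sides $-\bc^n_t-\bp_t\bi^n_t+F$ and $\bi^n_t+g$ of \eqref{2}. This is tantamount to the feedback characterization of the optimal controls $(c^*,i^*)$ stated after \eqref{eq_HJ}. Once this identification is unambiguous, the rest is the standard observation that the pushforward of an empirical measure by a smooth flow solves the associated continuity equation in the sense of distributions; in particular the argument is indifferent to whether some of the trajectories happen to coincide.
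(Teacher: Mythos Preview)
Your argument is correct and is precisely the computation the paper has in mind: the paper's own proof is the single sentence ``the proof follows by combining \eqref{rdef} with \eqref{ssd},'' and what you wrote is a faithful unpacking of that sentence. Your identification of the only nontrivial point---that the velocities in \eqref{2} must coincide with $\partial_{q_a}H$, $\partial_{q_k}H$ via the optimal feedback---is exactly the implicit assumption behind the statement.
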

\begin{proof}
The proof follows by combining \eqref{rdef} with \eqref{ssd}.	
\end{proof}

\subsection{Computation of the Hamiltonian}\label{subsection_computeH}

Define $\qa^n(t) := \partial_aV(\ba^n_t,\bk^n_t,t)$ and $\qk^n(t) := \partial_kV(\ba^n_t,\bk^n_t,t)$, where $V$ is given by \eqref{def_value_function}. As $V(\cdot,\cdot,T) = 0$, we have $\qa^n(T) = 0$ and $\qk^n(T)=0$.
Set $h(a,k,c,i,q_a,q_k,p) := (-c-pi+F(k,p))q_a+(i+g(k,p))q_k+u(a,k,c,i)$ and assume $u\in C^1(\R^4)$.

The Hamiltonian is 
\[
H(a,k,q_a,q_k,p) := \sup_{(c,i)\in\R^2}h(a,k,c,i,q_a,q_k,p).
\]
For the optimal $c$ and $i$, we have $\partial_ch=0$ and $\partial_ih=0$.
Hence
\begin{equation}\label{syst_dh=0}
\left\lbrace
\begin{array}{rcl}
q_a & = & \partial_cu(a,k,c^*,i^*)\\
q_k & = & p\partial_cu(a,k,c^*,i^*) - \partial_iu(a,k,c^*,i^*).\\
\end{array}
\right.
\end{equation}
If $u$ is strictly concave in $c$ and $\partial_cu(a,k,c,i)=u_c(c)$ depends only on $c$, then $u_c$ is strictly decreasing and we can compute $c^* = u_c^{-1}(q_a)$. The equation for $q_k$ can be written $\partial_iu(a,k,c^*,i^*)=pq_a-q_k$. As for $c$, if $\partial_iu(a,k,c,i)=u_i(i)$ depends only on $i$ is strictly decreasing, then $i^* = u_i^{-1}(pq_a-q_k)$. Consequently, we obtain
\begin{equation}
H(a,k,q_a,q_k,p) = h\left(a,k,u_c^{-1}(q_a),u_i^{-1}(pq_a-q_k),q_a,q_k,p\right).
\end{equation}

\subsection{Computation of the optimal trajectories}

        Assume $V$ is $C^1$. Then $V$ solves the Hamilton-Jacobi equation \eqref{eq_HJ}. Moreover, the method of characteristics  gives the Hamiltonian system \cite{GNP}
        \begin{equation}\label{syst_H}
        \left\lbrace
        \begin{array}{rcl}
        \dot{\ba}^n_t & = &
        \partial_{q_a}H(\ba^n_t,\bk^n_t,\qa^n(t),\qk^n(t),\bp_t)\\
        \dot{\bk}^n_t & = &
        \partial_{q_k}H(\ba^n_t,\bk^n_t,\qa^n(t),\qk^n(t),\bp_t)\\
        \frac{d}{d t}\qa^n(t) & = &
        - \partial_{a}H(\ba^n_t,\bk^n_t,\qa^n(t),\qk^n(t),\bp_t)\\
        \frac{d}{d t}\qk^n(t) & = &
        - \partial_{k}H(\ba^n_t,\bk^n_t,\qa^n(t),\qk^n(t),\bp_t)\\
        \end{array}
        \right.
        \end{equation}for the optimal trajectories of the $N$\ players. This system has initial/terminal conditions given by
        \begin{equation}
        \label{14}
        \ba^n_0=a^n_0,\ \bk^n_0=k^n_0,\  \qa^n(T) = 0,\ \text{and}\ \qk^n(T)=0.
        \end{equation}
        
        Reciprocally, we have the following proposition whose proof is given  in \cite{GNP}:
        \begin{prop}\label{prop_sufficient_condition}
                Assume that the utility function $u\in C^1(\R^4)$ is concave and is non-decreasing in $a$ and $k$, that the production function, $F$, is non-decreasing and concave in $k$ and that the depreciation function, $g$, is concave in $k$.\\
                If $(\ba,\bk,\qa,\qk)$ solves \eqref{syst_H} with the initial/terminal conditions \eqref{14}, then $\ba$ and $\bk$ are optimal trajectories for \eqref{def_value_function} and
                \[
                \qa(t)\geq 0\ \text{and}\ \qk(t) \geq 0\ \text{for all}\ t\in[0,T].
                \]
        \end{prop}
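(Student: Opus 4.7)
The plan is the standard Pontryagin-type sufficient condition for an optimal control problem with concave data: use the Hamiltonian system \eqref{syst_H} to produce the candidate trajectory and its adjoint variables, then exploit the sign of $(\qa, \qk)$ together with concavity of $u$, $F$, and $g$ to show that this candidate dominates every admissible competitor.

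First I would establish the sign of the costates via the envelope theorem. At the maximizing pair $(c^*, i^*)$ characterized by \eqref{syst_dh=0}, the partial derivatives of $H$ reduce to
\[
\partial_a H = \partial_a u(a, k, c^*, i^*), \qquad \partial_k H = F_k(k, p)\, q_a + g_k(k, p)\, q_k + \partial_k u(a, k, c^*, i^*).
\]
Along a solution of \eqref{syst_H}, the adjoint equations become $\dot{\qa} = -\partial_a u \leq 0$ (since $u$ is non-decreasing in $a$) and a linear ODE for $\qk$. Combining $\qa(T) = 0$ with $\dot\qa \leq 0$ gives $\qa \geq 0$ on $[0, T]$ by backward integration. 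Substituting this into the linear ODE for $\qk$ and introducing the integrating factor $\exp\bigl(\int_0^t g_k(\bk_\tau, \bp_\tau)\, d\tau\bigr)$, the terminal condition $\qk(T) = 0$ together with $F_k \geq 0$, $\partial_k u \geq 0$, and $\qa \geq 0$ yields $\qk \geq 0$.

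Next I would prove optimality by direct comparison. Let $(\tilde\ba, \tilde\bk, \tilde\bc, \tilde\bi)$ be any admissible competitor starting from the same initial data. Joint concavity of $u$ together with the first-order conditions $\partial_c u = \qa$, $\partial_i u = p\qa - \qk$ gives a pointwise affine lower bound on $u(\ba, \bk, \bc^*, \bi^*) - u(\tilde\ba, \tilde\bk, \tilde\bc, \tilde\bi)$. Using the state equations \eqref{2} to eliminate $\bc^* - \tilde\bc$ and $\bi^* - \tilde\bi$ in favor of $\dot\ba - \dot{\tilde\ba}$, $\dot\bk - \dot{\tilde\bk}$, $F(\bk) - F(\tilde\bk)$, and $g(\bk) - g(\tilde\bk)$, then integrating by parts with the adjoint equations, the linear-in-state terms cancel. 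The boundary contribution $[\qa(\ba - \tilde\ba) + \qk(\bk - \tilde\bk)]_0^T$ vanishes thanks to matching initial data and $\qa(T) = \qk(T) = 0$, leaving
\[
\int_0^T \Bigl\{\qa\bigl[F(\bk) - F(\tilde\bk) - F_k(\bk - \tilde\bk)\bigr] + \qk\bigl[g(\bk) - g(\tilde\bk) - g_k(\bk - \tilde\bk)\bigr]\Bigr\}\, ds,
\]
which is non-negative by concavity of $F$ and $g$ in $k$ combined with $\qa, \qk \geq 0$. This shows that the candidate trajectory achieves a value at least as large as any competitor.

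The main obstacle, and the source of any extra hypotheses one may need, is justifying the integration by parts rigorously against admissible competitors in the class $L^1_{R,loc} \times L^1_{loc}$: one must verify that $\qa F_k(\bk)$, $\qk g_k(\bk)$, and $\partial_a u$, $\partial_k u$ evaluated along the optimal trajectory are sufficiently integrable for the boundary terms and pointwise concavity inequalities to survive the integration. These estimates should follow from the growth controls in Assumption \ref{gammaalpha} and the sub-linearity Assumptions \ref{fsublinear} and \ref{gsublinear}, matching the bounds already exploited in the existence Theorem \ref{th_existence}.
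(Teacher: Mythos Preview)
Your argument is correct and is exactly the standard Pontryagin-type verification that the paper defers to \cite{GNP}: the paper gives no proof here beyond that citation. The sign of the adjoints from backward integration, followed by the concavity comparison and integration by parts leaving the $\qa[F-\tilde F-F_k(\bk-\tilde\bk)]+\qk[g-\tilde g-g_k(\bk-\tilde\bk)]$ residual, is precisely the route taken in \cite{GNP}.
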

        \begin{proof}
        See \cite{GNP}.
      	\end{proof}

        Taking into account Proposition \ref{prop_sufficient_condition}, we aim to solve the equation \eqref{syst_H} with corresponding initial/terminal conditions. 
        
        Firstly, we solve \eqref{syst_H} with initial conditions $(a_0^n,k^0_n)$ and $(\qa^n(0),\qk^n(0))$. The corresponding solution generates a function $Q^n : (\qa^n(0),\qk^n(0)) \mapsto (\qa^n(T),\qk^n(T))$.
        Next, we determine the initial conditions $ (\qa^n(0),\qk^n(0))$
such that $Q^n(q)=0$. In general, this inversion cannot be done analytically. 

\section{Numerical results}\label{sec: numerics}

Here, we outline our algorithm, discuss a model problem and present the corresponding numerical results.

\subsection{Description of the algorithm}

To compute the solutions to the mean-field game, we proceed as follows. 
We consider $N$ agents with initial state $(a_0^n,k_0^n)$, $1\leq n\leq N$. 
We
discretize the price function $\bp_t$ with $m+1$ equidistant points $
\bp_{\frac i m T}$, $0\leq i\leq m$. To recover the price at arbitrary times $t$, we use a third-order interpolant.

We define $q_{a_0}(a_0,k_0)$ and $q_{k_0}(a_0, k_0)$
to be the values for which the solution of \eqref{syst_H} with the initial condition
$(a_0, k_0,q_{a_0}(a_0,k_0),q_{k_0}(a_0, k_0) )$
satisfies $\qa(T)=\qk(T)=0$. This function is determined by solving numerically this system of two equations. 
For a fixed price $\bp_t$, we denote $(\ba^n(t),\bk^n(t))$, $1\leq n\leq N$
to be the solution of \eqref{syst_H} with initial condition 
$(a_0^n, k_0^n,q_{a_0}(a_0^n,k_0^n),q_{k_0}(a_0^n, k_0^n))$.

Next, we examine the equilibrium condition. 
For a given price $\bp_t$, the imbalance function 
is
\begin{align*}
\iota(t; \bp)=\frac{1}{N}
\sum_{n=1}^N
&\Big[\partial_{q_k}H(\ba^n(t), \bk^n(t), \qa^n(t), \qk^n(t),\bp_t)
\\
& -g(\bk^n(t), \bp_t)- \Xi(\bk^n(t), \bp_t)\Big].
\end{align*}
To find $\bp$, we look at a fixed point of the function
\begin{equation}
\label{psi}
\bp_t\mapsto \bp_t)+\mu\iota(t,\bp)\equiv \Psi(\bp)
\end{equation}
for some $\mu> 0$. Clearly, a fixed point of $\Psi$ is 
an equilibrium price. The construction of $\Psi$ corresponds to
the intuition that when supply does not meet demand prices should increase
whereas if supply exceeds demand, prices should decrease. 
When $\iota>0$, demand exceeds supply. Hence, prices are too low and are increased. In contrast, if $\iota<0$
supply exceeds demand. Accordingly, 
prices that are too high and are decreased. 

\subsection{A model problem}

To illustrate our methods, we consider a concrete example where
consumers produce consumer goods and capital
at a rate that depends linearly on the capital. 
We set
\[
\Theta(k,p)=k, \qquad \Xi(k,p)=0.1 k, 
\]
and so
\[
F(k,p)=k (0.1 p+1).
\]
We assume that the depreciation function is $g(k,p)=-\frac k 2$.
Next, we define
\[
u_1(x)=\begin{cases}
\sqrt{x+\frac{1}{16}}\qquad x>0,\\
\frac{5}{4}-(1-x)^2\qquad x\leq 0. 
\end{cases}
\]
We note that $u_1$ is a monotone increasing function of class $C^1$. 

Next, we consider the utility function
\[
u(a,k,c,i)=u_1(c)+u_1(a)+u_1(k)-\frac{i^2}2.
\]
This utility function satisfies the hypothesis in Section \ref{eot} and 
the Hamiltonian is
$
H = 
H^A + H^B + H^C, 
$
where
\[
H^A(a, k, q_a, q_k, p)=F(k, p) q_a + g(k, p) q_k + u_1(a) + u_1(k),
\]
\begin{align*}
H^B(a, k, q_a, q_k, p)&=
\sup_i \left[(-p q_a+q_k) i-\frac{i^2}2\right]
\\
&=(- p q_a + q_k)^2/2,
\end{align*}
and
\[
H^C(a, k, q_a, q_k, p)=
-q_a c^*(q_a) + u_1(c^*(q_a)), 
\]
with
\[
c^*(q_a)=
\begin{cases}
\frac{4-\text{qa}^2}{16 \text{qa}^2}\qquad q_a>2\\
\frac{2-\text{qa}}{2}\qquad q_a\leq 2.
\end{cases}
\]

\subsection{Numerical results} 
 
We considered $25$ agents, with initial conditions equally spaced
in $\left[\frac 1 2, \frac 3 2\right]\times \left[\frac 1 2, \frac 3 2\right]$;
the terminal time is $T=1$, and $\mu=0.8$ in \eqref{psi} . Our code was implemented in Mathematica, and we used the built-in  solver NSolve. 

We began with the initial $\bp^0_t\equiv 1$. 
Figure \ref{pricefig} depicts the equilibrium price (achieved after 10 iterations) and Figure \ref{trajfig} shows the trajectories of the agents. 
Finally, the average consumer goods and capital as a function of time are shown in Figures \ref{consumerfig} and \ref{capitalfig}, respectively. 
Agents to accumulate consumer goods in the beginning and consume more in later times, as can be seen in Figure \ref{consumerfig}.
Due to the balance condition, 
the average capital $\bar \bk$ solves the ODE
\[
\dot {\bar \bk}=-0.4 \bar \bk.
\]
In Figure  \ref{capitalfig}, we see the exact solution (dashed) superimposed on the numerical solution. Finally, we observe in Figure \ref{pricefig} that the price is a monotone decreasing function of time.

   \begin{figure}[thpb]
      \begin{center}
      \includegraphics[scale=0.5]{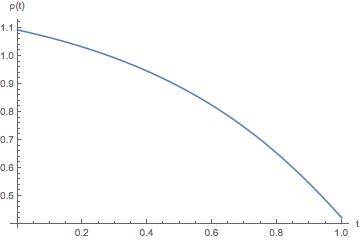}
      \caption{Price}
      \label{pricefig}
      \end{center}
   \end{figure}

 \begin{figure}[thpb]
        \begin{center}
        \includegraphics[scale=0.5]{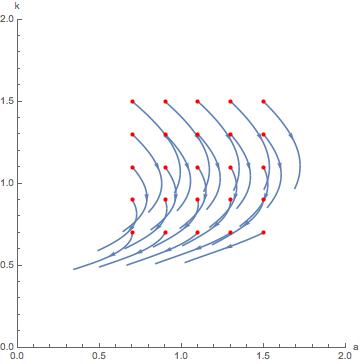}
        \caption{Agent's trajectories (initial conditions in red).}
        \label{trajfig}
        \end{center}
 \end{figure}

\begin{figure}[thpb]
        \begin{center}
                \includegraphics[scale=0.5]{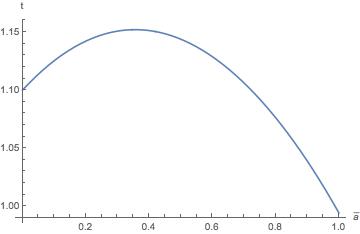}
                \caption{Average consumer goods - $\bar a$.}
                \label{consumerfig}
        \end{center}
\end{figure}

\begin{figure}[thpb]
        \begin{center}
                \includegraphics[scale=0.5]{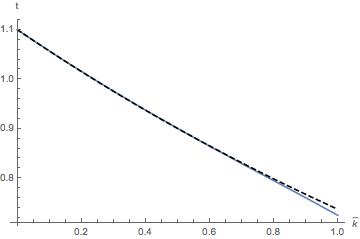}
                \caption{Average capital - $\bar k$ (exact solution dashed).}
                \label{capitalfig}
        \end{center}
\end{figure}

\section{Conclusions and further work}
\label{conc}

Here, we have developed the qualitative theory, approximation methods and 
a novel method for the computation of equilibrium prices in the economic growth model introduced in \cite{GNP}. Our simulations illustrate 
some of the qualitative properties that we would expect such a model to satisfy.
Even so, numerous questions remain open. For instance, what is the influence 
on the results
of the choice of production functions $\Theta$ and $\Xi$?
Here, the production functions are price-independent, but this may not be the case in a more realistic model.
Is the price a decreasing function?
We expect so, at least close to the terminal time since near 
$T$ consumption should take priority over investment. 
Can we prove convergence 
of the numerical method? While our iterative algorithm is based on sound economic principles, we would like to see a mathematical proof of convergence. Finally, in limited number of simulations the price seems to be unique. However, this is another issue that is still open.  We believe that 
these are important questions that should
be examined in the future.

\addtolength{\textheight}{-12cm}

\section*{Acknowledgment}

This work was partially supported by KAUST baseline and start-up funds, KAUST SRI, Uncertainty Quantification Center in Computational Science and Engineering and KAUST VSRP program.

\end{document}